\definecolor{oneblue}{rgb}{0,0.0,0.75}
\newcommand{\sech}{\mathop{\operator@font sech}}
\newcommand{\sign}{\mathop{\operator@font sign}}
\newtheorem{lemma}{Lemma}[section]
\newtheorem{theorem}{Theorem}[section]
\newtheorem{remark}{Remark}[section]
\numberwithin{equation}{section}
\begin{document}


\title[Error estimates for...]{Error estimates for semidiscrete Galerkin and collocation approximations to pseudo-parabolic problems with Dirichlet conditions}

\author{E. Abreu}
\address{Department of Applied Mathematics, IMECC, University of Campinas, Campinas, SP, Brazil.
Email: eabreu@ime.unicamp.br}

\author{A. Dur\'an}
\address{ Applied Mathematics Department,  University of
Valladolid, 47011 Valladolid, Spain. Email:angel@mac.uva.es}

%




\begin{abstract}
This paper is concerned with {the numerical approximation} of the Dirichlet initial-boundary-value problem of nonlinear pseudo-parabolic equations with spectral methods. Error estimates for the semidiscrete Galerkin and collocation schemes based on Jacobi polynomials are derived.
\end{abstract}
\maketitle

\section{Introduction}
The present paper deals with {the numerical analysis} to the initial-boundary-value problem (ibvp) of pseudo-parabolic type in $\Omega=(-1,1)$
\begin{eqnarray}
&&cv_{t}-(av_{xt})_{x}=-(\alpha v_{x})_{x}+\beta v_{x}+\gamma,\; x\in\Omega,\; t>0,\label{ad11a}\\
&&v(x,0)=v_{0}(x),\; x\in\Omega,\label{ad11b}\\
&&v(-1,t)=v(1,t)=0,\; t>0,\label{ad11c}
\end{eqnarray}
where $a=a(x), c=c(x)$ are  continuously differentiable functions in $\Omega$ and bounded above and below by positive constants. The right hand side of (\ref{ad11a}) involves linear and nonlinear terms
$\alpha=\alpha(x,t,v), \beta=\beta(x,t,v), \gamma=\gamma(x,t,v)$; they are assumed to be continuously differentiable functions of $x, t$ and $v$. 

{ In \cite{AbreuD2020}, we perform an extensive study, by computational means, of the use of spectral discretizations, of Galerkin and collocation type, based on Legendre and Chebyshev polynomials for models of type (\ref{ad11a})-(\ref{ad11c})}. The resulting semidiscrete systems are fully discretized there by suitable time integrators, with the aim at overcoming the possible midly stiff character of the ordinary differential problems and the difficulties to simulate nonsmooth data. This computational study is complemented by the present paper with a numerical analysis of the spectral discretization. More specifically, for Galerkin and collocation methods based on a family of Jacobi polynomials (which includes the Legendre and Chebyshev cases described in \cite{AbreuD2020}) existence of solution of the semidiscrete systems and error estimates in suitable Sobolev norms are derived. As usual for this kind of approaches, the results proved here establish the rate of convergence of the spectral approximation in terms of the regularity of the data of the problem. In particular, they justify those experiments in \cite{AbreuD2020} concerning spectral convergence in the smooth case.

Pseudo-parabolic equations of the form (\ref{ad11a}), in one or more dimensions, 
are used for modelling in different areas of Physics and Engineering. Relevant examples are the BBM-Burgers equation, a dissipative modification of the BBM equation for water waves, \cite{BBM1972}, and the pseudo-parabolic Buckley-Leverett equation describing two-phase flow in porous media, \cite{BearB1991}. We refer to the rich bibliography on it commented in \cite{AbreuD2020}.

The mathematical theory of pseudo-parabolic equations can be covered by \cite{MedeirosM1977,MedeirosP1977,ShowalterT1970,Showalter1978,FanP2011,BohmS1985,
CaoP2015,Colton1972,SeamV2011}. 
Existence and uniqueness of weak solutions to nonlinear pseudo-parabolic equations are proved in \cite{6PM}, whereas the existence of weak solutions for degenerate cases is studied in \cite{8MABH,9AM}. A homogenization of a closely related pseudo-parabolic system is considered in \cite{11PS}. Traveling wave solutions and their relation to non-standard shock solutions to hyperbolic conservation laws are investigated in \cite{12CH,13DPP} for linear higher order terms. Uniqueness of weak solutions for a pseudo-parabolic equations modelling flow in porous media can be found in \cite{ZC01,DKSL84,CaoP2015}. In \cite{GGG14}, the authors study existence and uniqueness of weak solutions of the initial and boundary value problem for a fourth-order pseudo-parabolic equation with variable exponents of non-linearity, along with a long-time behaviour of weak solutions. Finally, existence of weak solutions for a nonlocal pseudo-parabolic model for Brinkman two-phase flow model in porous media has been recently established, \cite{AJCR19}.
 
Concerning the numerical approximation of equations of the form (\ref{ad11a}), the literature contains many references involving finite differences, \cite{Amir1990,Amir1995,SunY2002,Amir2005,CuestaP2009,FanP2013}, as well as finite elements and finite volumes, \cite{Lu2017,Ning2017,AbreuV2017,Yang2008}. We also mention some convergence results. First, stability and convergence of difference approximations to pseudo-parabolic partial differential equations is discussed in \cite{FordT1973,FordT1974} and the time stepping Crank-Nicolson Galerkin method to approximate several nonlinear Sobolev-type problems is analyzed in \cite{Ewing1978,Ewing1975}.
Of particular relevance for the present study is the finite element approach for the nonlinear periodic-initial-boundary-value problem,  \cite{ArnoldDT1981}, where Arnold and collaborators obtain optimal error estimates, in $L^{2}$ and $H^{1}$norm, of a standard Galerkin method with continuous piecewise polynomials, and a nodal superconvergence. Moreover, Fourier spectral methods of Galerkin and collocation type for quasilinear pseudo-parabolic equations are analysed in \cite{Quarteroni1987}. This is, to our knowledge, the main reference about the use of spectral methods to approximate Sobolev equations. More recent convergence results can be found in  \cite{KPR17}, where an analysis of a linearization scheme for an interior penalty discontinuous Galerkin for a pseudo-parabolic model in porous media applications is considered. High-order finite differences are employed in \cite{Amir2017} and B-spline quasi-interpolation methods in \cite{RKSB16}. In addition, an adaptive mesh approach for pseudo-parabolic-type problems is introduced in \cite{ACHD20} and a Meshless RBFs method is considered  in \cite{HHG20}. Finally, unconditionally stable vector splitting schemes for pseudo-parabolic equations are constructed and analyzed in \cite{VabischevichG2013}. { It is worthwhile to mention that standard operator splitting may fail to capture the correct behavior of the solutions for pseudo-parabolic type differential models. In \cite{AbreuV2017}, the authors presented a non-splitting numerical method which is based on a fully coupled space-time mixed hybrid finite element/volume discretization approach to account for the delicate nonlinear balance between the hyperbolic flux and the pseudo-parabolic term linked to the full pseudo-parabolic differential model.}

The structure of the paper is as follows. Section \ref{sec2} is devoted to some theoretical aspects of (\ref{ad11a})-(\ref{ad11c}) as the weak formulation (already mentioned in \cite{AbreuD2020} for Legendre and Chebyshev cases) and assumptions on well-posedness. These preliminaries also include a summary on inverse inequalities, as well as projection and interpolation error estimates for the family of Jacobi polynomials under consideration. The contents of Section \ref{sec2} will be used to the numerical analysis of the spectral Galerkin approximation in Section \ref{sec3}, and the collocation approximation in Section \ref{sec4}. Both contain, under suitable hypotheses on the data of the problem, results on the existence of numerical solution and convergence to the solution of (\ref{ad11a})-(\ref{ad11c}). Concluding remarks and perspectives for future work are outlined in Section \ref{sec5}.

We now describe the main notation used throughout the paper. For positive integer $p$, $L^{p}(\Omega)$ denotes the normed space of $L^{p}$-functions on $\Omega$ with $||\cdot ||_{p}$ as associated norm, while for nonnegative integer $m$, $C^{m}(\overline{\Omega})$ is the space of $m$-th order continuously differentiable functions on $\overline{\Omega}=[-1,1]$. 

Let $-1<\mu<1$ and define the Jacobi weight function 
\begin{eqnarray}
w(x)=w_{\mu}(x)=(1-x^{2})^{\mu},\; x\in\Omega.\label{jacobiw}
\end{eqnarray}
($\mu=0$ corresponds to the Legendre case and $\mu={-1/2}$ to the Chebyshev case.) Then 
$L_{w}^{2}=L_{w}^{2}(\Omega)$ will denote the space of squared integrable functions with respect to the weighted inner product
\begin{eqnarray}
(\phi,\psi)_{w}=\int_{-1}^{1}\phi(x)\psi(x)w(x)dx,\; \phi,\psi\in L_{w}^{2},\label{12a}
\end{eqnarray}
and associated norm $||\phi||_{0,w}=(\phi,\phi)_{w}^{1/2}$. For the Sobolev spaces $H_{w}^{k}=H_{w}^{k}(\Omega), k\geq 0$ integer (where $H_{w}^{0}=L_{w}^{2}$) the corresponding norm will be denoted by
\begin{eqnarray*}
||\phi ||_{k,w}^{2}=\sum_{j=0}^{k}||\frac{d^{j}}{dx^{j}}\phi||_{0,j}^{2}.
\end{eqnarray*}
We will also consider the spaces $H_{w,0}^{k}=H_{w,0}^{k}(\Omega)$ of functions $\phi\in H_{w}^{k}$ such that $\phi(-1)=\phi(1)=0$. For $s\geq 0$, $H_{w}^{s}=H_{w}^{s}(\Omega)$ (and $H_{w,0}^{s}=H_{w,0}^{s}(\Omega)$) are defined by interpolation theory, \cite{Adams}. Note that in the case of the Legendre approximation ($w(x)=1$) the spaces $H_{w}^{s}, H_{w,0}^{s}$ are the standard Sobolev spaces $H^{s}, H_{0}^{s}$.

For an integer $N\geq 2$, $\mathbb{P}_{N}$ will stand for the space of polynomials of degree at most $N$ on $\overline{\Omega}$ and
\begin{eqnarray*}
\mathbb{P}_{N}^{0}=\{p\in\mathbb{P}_{N} / p(-1)=p(1)=0\}.
\end{eqnarray*}
If $T>0$ and $1\leq p\leq\infty$, $L^{p}(0,T)$ stands for the space of $L^{P}$ functions on $(0,T)$  with norm $|\cdot|_{p}$. For an integer $k\geq 0$, the space of $m$-th order continuously differentiable functions $u:(0,T]\rightarrow X$, where $X=H_{w}^{s}$ or $H_{w,0}^{s}, s\geq 0$, will be denoted by $C^{k}(0,T,X)$. Additionally, if $0<k<\infty$, $L^{k}(0,T,X)$ will stand for the normed space of functions $u:(0,T]\rightarrow X$ with associated norm
\begin{eqnarray*}
||u||_{L^{k}(0,T,X)}=\left(\int_{0}^{T}||u(t)||_{s,w}^{k}dt\right)^{1/k}.
\end{eqnarray*}
We also denote by $L^{\infty}(0,T,X)$ the space of functions $u:(0,T]\rightarrow X$ with finite norm
\begin{eqnarray*}
||u||_{L^{\infty}(0,T,X)}={\rm esssup}_{t\in (0,T)}||u(t)||_{s,w},
\end{eqnarray*}
where ${\rm essesup}$ stands for the essential spectrum.
Furthermore, $C^{1}(\Omega\times (0,T)\times \mathbb{R})$ (resp. $C_{b}^{1}=C_{b}^{1}(\Omega\times (0,T)\times \mathbb{R})$) will stand for the space of continuously differentiable (resp. uniformly bounded, continuously differentiable) functions $f=f(x,t,v)$ in $(x,t,v)\in \Omega\times (0,T)\times \mathbb{R}$.

The analysis of the collocation methods requires the introduction of discrete norms. Let $\{x_{j},w_{j}\}_{j=0}^{N}$ be the nodes and weights of the Gauss-Lobatto quadrature related to $w(x)$, \cite{Mercier,CanutoHQZ1988,BernardiM1997}. For $\phi, \psi$ continuous on $\overline{\Omega}$, the discrete inner product based on the Gauss-Lobatto data is denoted by
\begin{eqnarray}
\left(\phi,\psi\right)_{N,w}=\sum_{j=0}^{N}\phi(x_{j})\psi(x_{j})w_{j},\label{ad12}
\end{eqnarray}
with associated norm $||\phi||_{N,w}=\left(\phi,\phi\right)_{N,w}^{1/2}$. We recall that, \cite{CanutoHQZ1988}
\begin{eqnarray*}
\left(\phi,\psi\right)_{N,w}=\left(\phi,\psi\right)_{w},\label{ad13}
\end{eqnarray*}
if $\phi\psi\in\mathbb{P}_{2N-1}$. 
The equivalence of the norms $||\phi||_{N,w}$ and $||\phi||_{0,w}$ when $\phi\in \mathbb{P}_{N}$, established in the following lemma, was proved in \cite{CanutoQ1982a} for the case of Legendre and Chebyshev weights and in \cite{BernardiM1989} for (\ref{jacobiw}) with $\mu>-1$.
\begin{lemma}
\label{lemma11} Let $N\geq 2$ be an integer. Then there exist positive constants $C_{1}, C_{2}$, independent of $N$, such that for any $\phi\in \mathbb{P}_{N}$
\begin{eqnarray*}
C_{1}||\phi||_{0,w}\leq ||\phi||_{N,w}\leq C_{2} ||\phi||_{0,w}.\label{ad14}
\end{eqnarray*}
\end{lemma}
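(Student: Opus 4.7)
The plan is to establish the equivalence by working in the basis of orthonormal Jacobi polynomials $\{p_{k}\}_{k\geq 0}$ associated with the weight $w$, exploiting Parseval's identity for the continuous norm and the exactness of the Gauss--Lobatto quadrature for the discrete norm. Any $\phi\in\mathbb{P}_{N}$ admits the expansion $\phi=\sum_{k=0}^{N}a_{k}p_{k}$, and by the orthonormality of $\{p_{k}\}$ in $L_{w}^{2}$ one has
\begin{eqnarray*}
||\phi||_{0,w}^{2}=\sum_{k=0}^{N}a_{k}^{2}.
\end{eqnarray*}

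Next I would compute $||\phi||_{N,w}^{2}=\sum_{j,k=0}^{N}a_{j}a_{k}(p_{j},p_{k})_{N,w}$. Since the Gauss--Lobatto quadrature with $N+1$ nodes is exact on $\mathbb{P}_{2N-1}$, one has $(p_{j},p_{k})_{N,w}=(p_{j},p_{k})_{w}=\delta_{jk}$ whenever $j+k\leq 2N-1$, that is, whenever at least one index is strictly less than $N$. The only term that can deviate from the continuous inner product is $(p_{N},p_{N})_{N,w}$, so the cross sums collapse and one obtains
\begin{eqnarray*}
||\phi||_{N,w}^{2}=\sum_{k=0}^{N-1}a_{k}^{2}+\lambda_{N}\,a_{N}^{2},\qquad \lambda_{N}:=(p_{N},p_{N})_{N,w}.
\end{eqnarray*}
Comparing with the Parseval identity, the claim is reduced to showing that there exist positive constants $c_{1},c_{2}$, independent of $N$, with $c_{1}\leq\lambda_{N}\leq c_{2}$; then $C_{1}=\min(1,\sqrt{c_{1}})$ and $C_{2}=\max(1,\sqrt{c_{2}})$ do the job.

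The main point, and the real obstacle of the argument, is therefore the uniform two-sided bound for $\lambda_{N}$. This is where the hypothesis $-1<\mu<1$ on the Jacobi exponent enters. I would invoke the explicit formulas (or sharp asymptotic estimates, as in \cite{BernardiM1997,CanutoHQZ1988}) for the Jacobi Gauss--Lobatto weights $w_{j}$ and for the values $p_{N}(x_{j})$, from which $\lambda_{N}=\sum_{j=0}^{N}p_{N}(x_{j})^{2}w_{j}$ can be rewritten and compared with the exact norm $(p_{N},p_{N})_{w}=1$. The classical estimates yield $\lambda_{N}\to$ a finite positive limit (in the Legendre case $\lambda_{N}=2+1/N$ after proper normalization), which gives the required $N$-independent constants.

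Once this is secured, the two-sided inequality is immediate. For the Legendre and Chebyshev weights the argument was carried out in \cite{CanutoQ1982a}, and the extension to the full Jacobi family with $\mu>-1$ was proved in \cite{BernardiM1989}; the proof sketched above is essentially an abstract reformulation of that strategy, isolating the single mode $p_{N}$ where the Gauss--Lobatto rule fails to be exact as the only source of discrepancy between the two norms.
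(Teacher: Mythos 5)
The paper gives no proof of this lemma, quoting it from \cite{CanutoQ1982a} (Legendre and Chebyshev) and \cite{BernardiM1989} (general Jacobi weights with $\mu>-1$); your argument is the standard one underlying those references. The reduction, via Parseval and the exactness of the Gauss--Lobatto rule on $\mathbb{P}_{2N-1}$, to the single quantity $\lambda_{N}=\|p_{N}\|_{N,w}^{2}$ is correct, and the uniform two-sided bound on $\lambda_{N}$ for $-1<\mu<1$ that you defer to the explicit Jacobi Gauss--Lobatto weight formulas is precisely the content supplied by those references.
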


Finally $C$ will be used to denote a generic, positive constant, independent of $N$ and $u$, but that may depend on $t$ (this will be denoted by $C(t)$).
\section{Preliminaries}
\label{sec2}
\subsection{Weak formulation}
The analysis of the spectral discretizations that will be made below requires some hypotheses, properties and technical results concerning (\ref{ad11a})-(\ref{ad11c}) and the approximation in weighted norms. 
From now on we will fix $\mu\in (-1,1)$ and consider the weight (\ref{jacobiw}). 
The first property to be mentioned is the weak formulation of (\ref{ad11a})-(\ref{ad11c}), cf. \cite{AbreuD2020}
\begin{eqnarray}
A(v_{t}, \psi)=B(v,\psi),\; \psi\in H_{w,0}^{1} \label{ad25}
\end{eqnarray}
with $v(0)=v_{0}$ and
\begin{eqnarray}
A(\phi,\psi)&=&(c\phi,\psi)_{w}+L_{a}(\phi,\psi),\label{ad26}\\
B(\phi,\psi)&=&L_{\alpha}(\phi,\psi)+(\beta(\phi)\phi_{x},\psi)_{w}+(\gamma(\phi),\psi)_{w},\; \phi, \psi\in H_{w,0}^{1},\nonumber
\end{eqnarray}
where, for $d=d(x,t,v)$
\begin{eqnarray*}
L_{d}(\phi,\psi)=\int_{-1}^{1}d\phi_{x}(\psi w)_{x}dx.\label{ad24}
\end{eqnarray*}
Since $a$ is bounded above and below by positive constants, then $L_{a}$ is equivalent to
\begin{eqnarray}
L(\phi,\psi)=\int_{-1}^{1}\phi_{x}(\psi w)_{x}dx,\label{ad26a}
\end{eqnarray}
and therefore, \cite{BernardiM1989,CanutoHQZ1988,BernardiM1997}, the bilinear form $A$ in (\ref{ad26}) is continuous in $H_{w}^{1}\times H_{w,0}^{1}$ and elliptic in $H_{w,0}^{1}\times H_{w,0}^{1}$, that is, there are positive constants $C_{1}, C_{2}$ such that for all $\phi, \psi\in H_{w,0}^{1}$
\begin{eqnarray*}
|A(\phi,\psi)|&\leq &C_{1} \left(||\phi||_{0,w}||\psi||_{0,w}+||\phi_{x}||_{0,w}||\psi_{x}||_{0,w}\right)\\
&\leq &C_{1} ||\phi||_{1,w}||\psi||_{1,w},\; \phi\in H_{w}^{1}, \psi\in H_{w,0}^{1},\\
A(\phi,\phi)&\geq &C_{2} ||\phi||_{1,w}^{2},\;\phi, \psi\in H_{w,0}^{1}.
\end{eqnarray*}

The weak formulation (\ref{ad25}) is used to assume well-posedness of (\ref{ad11a})-(\ref{ad11c}), according to the following results, cf. \cite{ShowalterT1970,ArnoldDT1981}
\begin{theorem}
\label{theorem21} Let $T>0$ and assume that $a, c\in C^{1}(\Omega)$, $\alpha, \beta, \gamma \in C_{b}^{1}(\Omega\times (0,T)\times \mathbb{R})$. Given $v_{0}\in H_{w,0}^{1}$, then there is a unique solution $v\in C^{1}(0,T,H_{w,0}^{1})$ of (\ref{ad25}) with $||v||_{L^{\infty}(0,T,H_{w}^{1})}$ bounded by a constant depending only on $||v_{0}||_{1,w}$ and the data of the problem. Furthermore, if $v_{0}\in H_{w,0}^{k}$ with $k> 1$ integer, $a,c\in C^{k-1}({\Omega}), \alpha, \beta, \gamma, \delta\in C^{k-1}(\Omega\times (0,T)\times\mathbb{R})$,
then $v(t)\in H_{w,0}^{k}$ for all $t\in (0,T)$ and
\begin{eqnarray}
||v|||_{L^{\infty}(0,T,H_{w}^{k}) }+||v_{t}|||_{L^{\infty}(0,T,H_{w}^{k})} \leq C,\label{ad26b}
\end{eqnarray}
where $C$ is a constant depending only on $||v_{0}||_{k,w}$ and the data of the problem.
\end{theorem}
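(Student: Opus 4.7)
The plan is to recast the weak formulation (\ref{ad25}) as an abstract Cauchy problem in $H_{w,0}^{1}$, following the Showalter--Ting framework \cite{ShowalterT1970} as adapted to Dirichlet settings in \cite{ArnoldDT1981}. Using the continuity and ellipticity of $A$ established just before the statement, the Lax--Milgram lemma produces a bounded linear isomorphism $\mathcal{A}\colon H_{w,0}^{1}\to (H_{w,0}^{1})^{*}$ with $\|\mathcal{A}^{-1}\|\le 1/C_{2}$. Since $\alpha,\beta,\gamma\in C_{b}^{1}$, the nonlinear map $\mathcal{B}(t,\cdot)\colon H_{w,0}^{1}\to(H_{w,0}^{1})^{*}$ defined by $\langle\mathcal{B}(t,\phi),\psi\rangle=B(\phi,\psi)$ is well defined, uniformly bounded, and locally Lipschitz in $\phi$ uniformly for $t\in[0,T]$, through standard Nemytskii-type estimates controlling $|\alpha(x,t,v_{1})-\alpha(x,t,v_{2})|$ and its analogues. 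Hence (\ref{ad25}) becomes the Lipschitz ODE
\begin{eqnarray*}
\dot v=\mathcal{A}^{-1}\mathcal{B}(t,v),\qquad v(0)=v_{0},
\end{eqnarray*}
in $H_{w,0}^{1}$, and the Picard--Lindel\"of theorem supplies a unique local solution $v\in C^{1}([0,T^{*});H_{w,0}^{1})$.

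Next, I would derive the global a priori bound by testing (\ref{ad25}) with $\psi=v_{t}\in H_{w,0}^{1}$. The ellipticity of $A$ together with the $C_{b}^{1}$ bound on the coefficients yields
\begin{eqnarray*}
C_{2}\|v_{t}\|_{1,w}^{2}\le A(v_{t},v_{t})=B(v,v_{t})\le C\,(1+\|v\|_{1,w})\,\|v_{t}\|_{1,w},
\end{eqnarray*}
where the last inequality comes from $|B(\phi,\psi)|\le C(1+\|\phi\|_{1,w})\|\psi\|_{1,w}$, obtained by a direct estimate of the three contributions to $B$. Therefore $\|v_{t}\|_{1,w}\le C(1+\|v\|_{1,w})$; integrating in time and applying Gronwall's lemma produces
\begin{eqnarray*}
\|v(t)\|_{1,w}\le (1+\|v_{0}\|_{1,w})e^{CT}-1,
\end{eqnarray*}
which both supplies the claimed $L^{\infty}(0,T;H_{w}^{1})$ bound and precludes finite-time blow-up, so the local solution extends to the whole interval $[0,T]$.

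For the higher-regularity statement, I would bootstrap by induction on $k$. Under the additional smoothness $a,c\in C^{k-1}(\overline{\Omega})$, weighted elliptic regularity for the Jacobi-weighted operator associated with $A$ (cf. \cite{BernardiM1989,BernardiM1997}) yields that $\mathcal{A}^{-1}$ maps a suitable subspace of $H_{w}^{k-1}$ into $H_{w,0}^{k}$. Since $\alpha,\beta,\gamma\in C^{k-1}$ and $H_{w}^{k-1}$ is a Banach algebra for $k\ge 2$, the functional $\mathcal{B}(t,\cdot)$ sends $H_{w,0}^{k}$ into $H_{w}^{k-1}$ and is locally Lipschitz there. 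Rerunning the ODE argument in $H_{w,0}^{k}$ yields $v\in C^{1}(0,T;H_{w,0}^{k})$, and the identity $v_{t}=\mathcal{A}^{-1}\mathcal{B}(t,v)$, combined with the boundedness of $\mathcal{A}^{-1}\colon H_{w}^{k-1}\to H_{w,0}^{k}$, transfers the bound on $v$ into the simultaneous bound on $v_{t}$ expressed in (\ref{ad26b}).

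The main obstacle I anticipate is not the ODE mechanism itself, which is classical, but the weighted-Sobolev bookkeeping in the higher-regularity step: the relevant elliptic regularity and Banach-algebra properties of $H_{w}^{k}$ near the singular endpoints $x=\pm 1$ are standard for Jacobi weights but technical, and have to be imported from the Bernardi--Maday line of references rather than verified in line. A careful check is also needed that the inductive use of Picard in $H_{w,0}^{k}$ respects the Dirichlet boundary condition, which is automatic since $\mathcal{A}^{-1}$ is defined into $H_{w,0}^{k}$ and $\mathcal{B}(t,v)$ acts only as a functional on test functions vanishing at $\pm 1$.
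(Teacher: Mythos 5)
The paper does not actually prove Theorem \ref{theorem21}: it is stated as an imported well-posedness assumption, with the argument deferred to \cite{ShowalterT1970,ArnoldDT1981}, so there is no internal proof to compare against. Your proposal is a correct reconstruction of exactly the argument used in those references --- Lax--Milgram plus Picard--Lindel\"of for the abstract ODE $v_{t}=\mathcal{A}^{-1}\mathcal{B}(t,v)$, the $\psi=v_{t}$ energy estimate with Gronwall for the global $H_{w}^{1}$ bound (matching the bounds of Remark \ref{remark21}), and an elliptic bootstrap for higher regularity --- the only quibble being the derivative count in the bootstrap: for $v\in H_{w,0}^{k}$ the leading term of $\mathcal{B}(t,v)$, namely $-(\alpha(v)v_{x})_{x}$, lies in $H_{w}^{k-2}$ rather than $H_{w}^{k-1}$, which is compensated by the fact that $\mathcal{A}^{-1}$ gains two derivatives in the weighted scale, not one, so the composite map still sends $H_{w,0}^{k}$ to $H_{w,0}^{k}$ as you need.
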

\subsection{Projection and interpolation errors with Jacobi polynomials}
Here we collect several results concerning projection and interpolation errors with respect to the weighted inner product (\ref{12a}) and that will be used below. We refer to, e.~g., \cite{GottliebO,Mercier,MadayQ1981,BernardiM1989,CanutoHQZ1988,CanutoQ1982a,BernardiM1997,ShenTW2011} for details and additional properties.

The estimates in the weighted Sobolev spaces $H_{w}^{s}, s\geq 0$, concern the use of the family of Jacobi polynomials $\{J_{n}^{\mu}\}_{n=0}^{\infty}$, which are orthogonal to each other in $L_{w}^{2}$. Particular cases such as Legendre and Chebyshev families correspond to $\mu=0$ and $\mu=-1/2$, respectively. Most properties of this Jacobi family (a particular one of the more general Jacobi polynomials $\{J_{n}^{\mu,\nu}\}_{n=0}^{\infty}$, orthogonal in $L_{w_{\mu,\nu}}^{2}$ with $w_{\mu,\nu}(x)=(1-x)^{\mu}(1+x)^{\nu}$) are extension of the corresponding properties of the Legendre family, \cite{BernardiM1989,BernardiM1997}.

We start with projection errors. Let $N\geq 2$ be an integer, $v\in H_{w}^{s}, s\geq 0$ and let $P_{N}v\in \mathbb{P}_{N}$ be the orthogonal projection of $v$ with respect to the inner product (\ref{12a}), and $P_{N}^{10}v\in \mathbb{P}_{N}^{0}$ be the orthogonal projection of $v$ with respect to the inner product in $H_{w,0}^{1}$
\begin{eqnarray*}
[\phi,\psi]_{w}=\int_{-1}^{1} \phi^{\prime}(x)\psi^{\prime}(x)w(x)dx.
\end{eqnarray*}
Then we have, \cite{BernardiM1989,CanutoQ1981,BernardiM1997}
\begin{eqnarray}
||v-P_{N}v||_{0,w}&\leq &C N^{-s}||v||_{s,w},\; v\in H_{w}^{s},\; s\geq 0,\label{c42a}\\
||v-P_{N}v||_{r,w}&\leq &C N^{2r-1/2-s}||v||_{s,w},\; v\in H_{w}^{s},\; 1\leq r\leq s,\nonumber
\end{eqnarray}
and for $v\in H_{w}^{s}\cap H_{w,0}^{1}$
\begin{eqnarray}
||v-P_{N}^{10}v||_{1,w}+N||v-P_{N}^{10}v||_{0,w}&\leq &C N^{1-s}||v||_{s,w},\nonumber\\
&&s\geq 1, -1<\mu\leq 0,\label{c42c}\\
||v-P_{N}^{10}v||_{1,w}+N^{1-\mu}||v-P_{N}^{10}v||_{0,w}&\leq &C N^{1-s}||v||_{s,w},\nonumber\\&& s\geq 1, 0<\mu\leq 1.\label{c42d}
\end{eqnarray}
In the Legendre and Chebyshev cases, sharper estimates hold, see \cite{CanutoQ1982a,CanutoHQZ1988}.

A third projection operator used below concerns the bilinear form $A$ given by (\ref{ad26}). If $v\in H_{w,0}^{1}$ then the orthogonal projection $\overline{v}\in \mathbb{P}_{N}^{0}$ of $v$ with respect to $A$ is defined as $\overline{v}=R_{N}v:(0,T)\rightarrow \mathbb{P}_{N}^{0}$ such that
\begin{eqnarray}
A(\overline{v}-v,\psi)=0,\; \psi\in \mathbb{P}_{N}^{0}.\label{ad27}
\end{eqnarray}
For this projection, we have, \cite{BernardiM1989}
\begin{eqnarray}
||v-\overline{v}||_{1,w}+N||v-\overline{v}||_{0,w}\leq C N^{1-m}||v||_{m,w},\label{ad27b}
\end{eqnarray}
for $v\in H_{w}^{m}, m\geq 1$. Furthermore, a generalized estimate can be obtained as follows. If $v\in H_{w}^{m}, m\geq 2$, let $u^{N}\in\mathbb{P}_{N}$ be a polynomial such that, \cite{CanutoHQZ1988}
\begin{eqnarray}
||u^{N}-v||_{k,w}\leq C N^{k-m}||v||_{m,w}.\; 0\leq k\leq 2.\label{ad27c}
\end{eqnarray}
By  using (\ref{ad27b}), (\ref{ad27c}) and the inverse inequalities, \cite{BernardiM1989}
\begin{eqnarray*}
||\psi||_{s,w}\leq CN^{2(s-r)}||\psi||_{r,w},\; \psi\in\mathbb{P}_{N},\; 0\leq r\leq s,\label{c41}
\end{eqnarray*}
we have
\begin{eqnarray}
||v-\overline{v}||_{2,w}&\leq & ||u^{N}-v||_{2,w}+||u^{N}-\overline{v}||_{2,w}\nonumber\\
&\leq & C N^{2-m}||v||_{m,w}+C N^{2}||u^{N}-\overline{v}||_{1,w}\nonumber\\
&\leq & C N^{2-m}||v||_{m,w}+C N^{2}\left(||u^{N}-v||_{1,w}+||v-\overline{v}||_{1,w}\right)\nonumber\\
&\leq & C N^{3-m}||v||_{m,w}.\label{ad27d}
\end{eqnarray}
%
%
Let $N\geq 2$ be an integer, $s\geq 0, v\in H_{w}^{s}$ and let $I_{N}v$ denote the interpolant polynomial of $v$ on $\mathbb{P}_{N}$ based on the Gauss-Lobatto-Jacobi nodes. 
The following estimates for the interpolation errors can be seen in \cite{BernardiM1989,BernardiM1997}: for $v\in H_{w}^{s}$
\begin{eqnarray}
||v-I_{N}v||_{r,w}&\leq &C N^{r-s}||v||_{s,w}\; v\in H_{w}^{s},\nonumber\\
&& 0\leq r\leq 1,\; s>\sup\{\frac{1+r}{2},\frac{1+r+\mu}{2}\}. \label{c43}
\end{eqnarray}
Finally, an additional estimate comparing the continuous and discrete inner products will be necessary:
if $f\in H_{w}^{1}$ and $\phi\in\mathbb{P}_{N}$, then
\begin{eqnarray}
|(f,\phi)_{w}-(f,\phi)_{N,w}|&\leq &C\left(||f-P_{N-1}f||_{0,w}\right.\nonumber\\
&&\left.+||f-I_{N}f||_{0,w}\right)||\phi||_{1,w},\label{c49}
\end{eqnarray}
where $(\cdot,\cdot)_{N,w}$ is given by (\ref{ad12}), \cite{CanutoHQZ1988}.

\section{Spectral Galerkin approximation}
\label{sec3}
Let $N\geq 2$ be an integer, $T>0$. The semidiscrete Galekin approximation is defined as the function $v^{N}:(0,T)\rightarrow \mathbb{P}_{N}^{0}$ satisfying, cf. \cite{AbreuD2020}
\begin{eqnarray}
A(v_{t}^{N},\psi)&=&B(v^{N},\psi),\; \psi\in\mathbb{P}_{N}^{0},\label{ad31a}\\
A(v^{N}(0),\psi)&=&A(v_{0},\psi),\; \psi\in\mathbb{P}_{N}^{0}.\label{ad31b}
\end{eqnarray}
\begin{remark}
\label{remark221}
In what follows, we will make use of the two identities below {(see, e.g., \cite{ArnoldDT1981,Quarteroni1987})}. Let $F=F(x,t,u)$ a $C^{1}$ function of $x,t,u$. Then
\begin{eqnarray}
F(v)-F(z)&=&(v-z)F^{*}(v,z),\label{ident1}\\
F(v)v_{x}-F(z)z_{x}&=&F(v)(v-z)_{x}+z_{x}(v-z)F^{*}(v,z),\label{ident2}\\
F^{*}(v,z)&=&\int_{0}^{1}F_{u}(v+\tau(z-v))d\tau .\nonumber
\end{eqnarray}
\end{remark}
Local existence and uniqueness of (\ref{ad31a}), (\ref{ad31b}) are ensured by standard theory of ordinary differential equations (ode) when (\ref{ad31a}) is considered as a finite system for the coefficients of $v^{N}$ in some basis of $\mathbb{P}_{N}^{0}$, by using the property of ellipticity of $A$ and continuity of $B$. Concerning this last point, some estimates on $B$ will be required in order to prove a global existence result. This is discussed in the following remark.

\begin{remark}
\label{remark21}
We write $B$ in (\ref{ad26}) in the form
\begin{eqnarray*}
B(\phi,\psi)&=&B_{1}(\phi,\psi)+B_{2}(\phi,\psi)+B_{3}(\phi,\psi)\\
B_{1}(\phi,\psi)&=&\int_{-1}^{1}\alpha(\phi)\phi_{x}(\psi w)_{x}dx,\\
B_{2}(\phi,\psi)&=&\int_{-1}^{1}\beta(\phi)\phi_{x}(\psi w)dx,\\
B_{3}(\phi,\psi)&=&\int_{-1}^{1}\gamma(\phi)(\psi w)dx,
\end{eqnarray*}
and assume that $\alpha, \beta, \gamma \in C_{b}^{1}(\Omega\times (0,T)\times \mathbb{R})$. Then, using continuity of (\ref{ad26a}), there are constants $\alpha_{1}, \beta_{1}>0$ such that
\begin{eqnarray*}
|B_{1}(\phi,\psi)|&\leq &\alpha_{1}||\phi||_{1,w}||\psi||_{1,w},\\
|B_{2}(\phi,\psi)|&\leq &\beta_{1}||\phi||_{1,w}||\psi||_{1,w}.
\end{eqnarray*}
As far as $B_{3}$ is concerned, from (\ref{ident1}), (\ref{ident2}) we can find a function $\delta=\delta(t)$, bounded on $(0,T)$, and a constant $\gamma_{1}$ such that
\begin{eqnarray*}
|B_{3}(\phi,\psi)|\leq (\delta+\gamma_{1}||\phi||_{1,w})||\psi||_{1,w}.
\end{eqnarray*}
\end{remark}

Global existence and uniqueness for (\ref{ad31a}), (\ref{ad31b}) and the convergence to the solution of (\ref{ad25}), (\ref{ad26}) are proved in the following result.
\begin{theorem}
\label{theorem32} For all $t\in (0,T)$, there is a unique solution $v^{N}(t)$ of (\ref{ad31a}), (\ref{ad31b}) satisfying
\begin{eqnarray}
||v^{N}||_{L^{\infty}(0,T,H_{w}^{1})}\leq C,\label{exist}
\end{eqnarray}
for some constant depending on $||v_{0}||_{1,w}$. Furthermore, 
assume that $v_{0}\in H_{w,0}^{m}, m\geq 1$, $a, c\in C^{m}(\Omega)\cap H_{w}^{m}$, $\alpha, \beta, \gamma\in C^{m}(\Omega\times (0,T)\times \mathbb{R})$ with $\alpha(\cdot,t), \beta(\cdot,t), \gamma(\cdot,t)\in H_{w}^{m}, t\in (0,T)$. Then
\begin{eqnarray}
||v^{N}-v||_{L^{\infty}(0,T,L_{w}^{2})}&\leq &C N^{-m},\label{ad37c}\\
||v^{N}-v||_{L^{\infty}(0,T,H_{w}^{1})}&\leq &C N^{1-m},\label{ad37b}
\end{eqnarray}
for some constant $C$ which depends on $||v_{0}||_{H_{w}^{m}}, \alpha, \beta, \gamma, T$ but not on $N$. 
\end{theorem}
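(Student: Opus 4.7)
The plan is to follow the elliptic-projection strategy standard for Sobolev-type equations, adapted to the Jacobi weighted spaces.

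\emph{Existence.} Writing $v^N(t)=\sum_{k}c_k(t)\varphi_k$ in a basis of $\mathbb{P}_N^0$ turns (\ref{ad31a})-(\ref{ad31b}) into an ODE system whose mass matrix is invertible by the coercivity of $A$, so Picard--Lindel\"of yields a local-in-time solution. To obtain (\ref{exist}) and extend to $[0,T]$ I test (\ref{ad31a}) with $\psi=v_t^N$: coercivity gives $C_2\|v_t^N\|_{1,w}^2\le A(v_t^N,v_t^N)=B(v^N,v_t^N)$, and Remark 2.3 bounds the right-hand side by $(\delta+C\|v^N\|_{1,w})\|v_t^N\|_{1,w}$, whence $\|v_t^N\|_{1,w}\le C(1+\|v^N\|_{1,w})$. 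Since $\tfrac{d}{dt}\|v^N\|_{1,w}\le\|v_t^N\|_{1,w}$, Gr\"onwall closes the argument.

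\emph{Setup for convergence.} Set $\overline{v}=R_Nv$ and split $v^N-v=\theta+\rho$, with $\theta=v^N-\overline{v}\in\mathbb{P}_N^0$ and $\rho=\overline{v}-v$. Estimate (\ref{ad27b}) combined with Theorem 2.1 yields $\|\rho\|_{1,w}\le CN^{1-m}$ and $\|\rho\|_{0,w}\le CN^{-m}$ uniformly in $t$. The initialisation (\ref{ad31b}) means $v^N(0)=R_Nv_0=\overline{v}(0)$, so $\theta(0)=0$. Subtracting (\ref{ad25}) from (\ref{ad31a}) and differentiating (\ref{ad27}) in $t$ produces the error equation $A(\theta_t,\psi)=B(v^N,\psi)-B(v,\psi)$ for $\psi\in\mathbb{P}_N^0$.

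\emph{The $H^1_w$ bound \textnormal{(\ref{ad37b})}.} I apply the identities of Remark 3.1 to each of $B_1,B_2,B_3$ in Remark 2.3; the $C_b^1$ hypothesis on $\alpha,\beta,\gamma$ together with the uniform $H^1_w$ bounds on $v$ and $v^N$ from Theorem 2.1 and the existence step give $|B(v^N,\psi)-B(v,\psi)|\le C(\|\theta\|_{1,w}+\|\rho\|_{1,w})\|\psi\|_{1,w}$. Taking $\psi=\theta_t$ and invoking coercivity yields $\|\theta_t\|_{1,w}\le \widetilde C(\|\theta\|_{1,w}+\|\rho\|_{1,w})$; integrating in $t$, using $\theta(0)=0$, and applying Gr\"onwall produces $\|\theta\|_{L^\infty(0,T;H^1_w)}\le CN^{1-m}$, which with the triangle inequality proves (\ref{ad37b}).

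\emph{The $L^2_w$ bound \textnormal{(\ref{ad37c})} --- the main obstacle.} The previous step only delivers $\|\theta\|_{0,w}\le\|\theta\|_{1,w}\le CN^{1-m}$ (the inverse inequalities of \S2 go the wrong way), so one would still conclude an order $N^{1-m}$ for $v^N-v$ in $L^2_w$. The missing factor $N^{-1}$ has to be recovered by an Aubin--Nitsche-type duality argument: at fixed $t$ solve the adjoint elliptic problem $A(\chi,\phi)=(\theta(t),\chi)_w$ for all $\chi\in H^1_{w,0}$, for which the smoothness of $a,c$ should give $\|\phi\|_{2,w}\le C\|\theta(t)\|_{0,w}$; then write $\|\theta\|_{0,w}^2=A(\theta,\phi-R_N\phi)+A(\theta,R_N\phi)$, control the first piece by $\|\theta\|_{1,w}\cdot N^{-1}\|\phi\|_{2,w}$ via (\ref{ad27b}), and rewrite the second by inserting $\psi=R_N\phi\in\mathbb{P}_N^0$ into the error equation integrated in time (so $\theta(0)=0$ enters), after integrating by parts in $x$ in the pieces of $B(v^N,\cdot)-B(v,\cdot)$ that carry $(v^N-v)_x$ so that the sharper bound $\|\rho\|_{0,w}\le CN^{-m}$ shows up in place of $\|\rho\|_{1,w}$. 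Absorbing $\|\phi\|_{2,w}\le C\|\theta\|_{0,w}$ into the left side and closing with Gr\"onwall yields $\|\theta\|_{0,w}\le CN^{-m}$, and (\ref{ad37c}) follows. The hard part will be securing the adjoint $H^2_w$-regularity in the Jacobi weighted setting across the full range $-1<\mu<1$ (in particular circumventing the suboptimal $L^2_w$ behaviour of $P_N^{10}$ for $\mu>0$ reflected in (\ref{c42d})) and pushing the duality through the nonlinear and first-order parts of $B$ without losing the extra factor of $N^{-1}$.
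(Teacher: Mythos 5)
Your proposal is correct and follows essentially the same route as the paper: existence via testing with $v_t^N$, the bound of Remark \ref{remark21} and Gronwall; the elliptic projection (\ref{ad27}) with the splitting $v^N-v=\theta+\rho$ and coercivity of $A$ for the $H_w^1$ estimate; and an Aubin--Nitsche duality argument, with integration by parts inside $B$ to trade $\|e^N\|_{1,w}$ for $\|e^N\|_{0,w}$, for the $L_w^2$ estimate. The only differences are cosmetic --- the paper applies the duality to $\xi_t^N=-\theta_t$ and then integrates in time rather than dualizing $\theta(t)$ directly --- and the two points you flag as the hard part (weighted $H_w^2$ regularity of the adjoint problem and the integration by parts across $-1<\mu<1$) are settled in the paper by citing the regularity theory of \cite{CanutoHQZ1988,BernardiM1989} together with Hardy's inequality; note also that the duality step only uses the $H_w^1$ bound in (\ref{c42c})--(\ref{c42d}), so the suboptimal $L_w^2$ behaviour of $P_N^{10}$ for $\mu>0$ is immaterial.
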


\begin{proof}
Following previous approaches, \cite{ArnoldDT1981,Quarteroni1987}, we first assume that $\alpha, \beta, \gamma\in C_{b}^{1}(\Omega\times (0,T)\times \mathbb{R})$. By using property of ellipticity of $A$ and Remark \ref{remark21}, we set $\psi=v_{t}^{N}$ in (\ref{ad31a}) and have
\begin{eqnarray*}
||v_{t}^{N}||_{1,w}\leq C ||v^{N}||_{1,w}+\delta(t),
\end{eqnarray*}
for some constant $C$. Then
\begin{eqnarray}
||v^{N}||_{1,w}&=&||v^{N}(0)+\int_{0}^{t}v_{t}^{N}(s)ds||_{1,w}\nonumber\\
&\leq & ||v^{N}(0)||_{1,w}+C\int_{0}^{t}||v^{N}(s)||_{1,w}ds+||\delta||_{L^{1}(0,T)}.\label{ad_226}
\end{eqnarray}
From (\ref{ad31b})  with $\psi=v^{N}(0)$ and properties of continuity and coercivity of $A$ we have $||v^{N}(0)||_{1,w}\leq C||v_{0}||_{1,w}$. This and Gronwall's lemma applied to (\ref{ad_226}) imply the existence of $v^{N}(t)$ for all $t\in (0,T)$ and (\ref{exist}).

As far as the error estimates are concerned, let $\overline{v}$ be the projection defined in (\ref{ad27}) and 
\begin{eqnarray}
\eta=\overline{v}-v, e^{N}=v^{N}-v, \xi^{N}=\overline{v}-v^{N}=\eta-e^{N}\in\mathbb{P}_{N}^{0}.\label{var}
\end{eqnarray}
Note that, due to (\ref{ad27}), $A(\eta_{t},\psi)=0, \psi\in \mathbb{P}_{N}^{0}$ holds. Thus, (\ref{ad11a}) and (\ref{ad31a}) imply, for $\psi\in \mathbb{P}_{N}^{0}$
\begin{eqnarray}
A(\xi_{t}^{N},\psi)=-A(e_{t}^{N},\psi)=-(B(v^{N},\psi)-B(v,\psi)).\label{ad310}
\end{eqnarray}
The right hand side of (\ref{ad310}) is written as
\begin{eqnarray}
B(v^{N},\psi)-B(v,\psi)&=&\widetilde{B}_{1}+\widetilde{B}_{2}+\widetilde{B}_{3},\label{btilde}\\
\widetilde{B}_{1}&=&\int_{-1}^{1}\left(\alpha(v+e^{N})(v+e^{N})_{x}-\alpha(v)v_{x}\right)(\psi w)_{x}dx,\nonumber\\
\widetilde{B}_{2}&=&\int_{-1}^{1}\left(\beta(v+e^{N})(v+e^{N})_{x}-\beta(v)v_{x}\right)(\psi w)dx,\nonumber\\
\widetilde{B}_{3}&=&\int_{-1}^{1}\left(\gamma(v+e^{N})-\gamma(v)\right)(\psi w)dx.\nonumber
\end{eqnarray}
We use (\ref{ident1}), (\ref{ident2}), the hypothesis $\alpha, \beta, \gamma\in C_{b}^{1}$ and Theorem \ref{theorem21} to have
\begin{eqnarray*}
|\widetilde{B}_{1}|&=&|\int_{-1}^{1} (\alpha(v^{N})e_{x}^{N}+v_{x}e^{N}\alpha^{*}(v^{N},v))(\psi w)_{x}dx|\nonumber\\
&\leq &C||e^{N}||_{1,w}||\psi||_{1,w},\label{btilde1}\\
|\widetilde{B}_{2}|&=&|\int_{-1}^{1} (\beta(v^{N})e_{x}^{N}+v_{x}e^{N}\beta^{*}(v^{N},v))(\psi w)dx|\nonumber\\
&\leq &C||e^{N}||_{1,w}||\psi||_{1,w},\label{btilde2}\\
|\widetilde{B}_{3}|&=&|\int_{-1}^{1}e^{N}\gamma^{*}(v^{N},v))(\psi w)dx|\leq C||e^{N}||_{0,w}||\psi||_{0,w}.\label{btilde3}
\end{eqnarray*}
Therefore
\begin{eqnarray}
|B(v^{N},\psi)-B(v,\psi)|\leq  C||e^{N}||_{1,w}||\psi||_{1,w},\label{ad_228}
\end{eqnarray}
for $\psi\in\mathbb{P}_{N}^{0}$ and with $C$ depending on $||v_{0}||_{1,w}$. Then, taking $\psi=\xi_{t}^{N}$ and using the coercivity of $A$, (\ref{ad310}) and (\ref{ad_228}) we obtain
\begin{eqnarray}
||\xi_{t}^{N}||_{1,w}\leq C ||e^{N}||_{1,w}.\label{ad_229}
\end{eqnarray}
Since (\ref{ad31b}) implies that $v^{N}(0)=\overline{v}(0)$ and therefore
$\xi^{N}(0)=0$, then writing $e^{N}=\eta-\xi^{N}$ yields
\begin{eqnarray*}
||\xi^{N}(t)||_{1,w}&=&||\int_{0}^{t}\xi_{t}^{N}(s)ds||_{1,w}\nonumber\\
&\leq & C\int_{0}^{t}(||\xi^{N}(s)||_{1,w}+||\eta(s)||_{1,w})ds.\label{ad312}
\end{eqnarray*}
Therefore, (\ref{ad37b}) holds from Gronwall's lemma, the property $e^{N}=\eta-\xi^{N}$ and Theorem \ref{theorem21}.

We now prove the estimate (\ref{ad37c}). Lax-Milgram theorem, \cite{Evans}, ensures the existence of $\varphi\in H_{w,0}^{1}$ such that, \cite{BernardiM1989,MadayQ1981}
\begin{eqnarray}
A(\psi,\varphi)=(\xi_{t}^{N},\psi)_{0,w},\; \psi\in H_{w,0}^{1}.\label{ad91}
\end{eqnarray}
Actually ({ see \cite{CanutoHQZ1988}}) $\varphi\in H_{w}^{2}$ and
\begin{eqnarray}
||\varphi||_{2,w}\leq C ||\xi_{t}^{N}||_{0,w}.\label{ad92}
\end{eqnarray}

We take $\psi=\xi_{t}^{N}$ in (\ref{ad91}) and use (\ref{ad310}) to estimate, (cf. \cite{AbreuD2020})
\begin{eqnarray}
||\xi_{t}^{N}||_{0,w}^{2}&\leq & A(\xi_{t}^{N},\varphi)=A(\xi_{t}^{N},\varphi-P_{N}^{10}\varphi)+A(\xi_{t}^{N},P_{N}^{10}\varphi)\nonumber\\
&=&A(\xi_{t}^{N},\varphi-P_{N}^{10}\varphi)-B(e^{N},P_{N}^{10}\varphi)\nonumber\\
&=&A(\xi_{t}^{N},\varphi-P_{N}^{10}\varphi)+B(e^{N},\varphi-P_{N}^{10}\varphi)\nonumber\\
&&-B(e^{N},\varphi).\label{ad93}
\end{eqnarray}
Now, continuity of $A$, (\ref{c42c}), (\ref{c42d})  and (\ref{ad92}) imply
\begin{eqnarray}
A(\xi_{t}^{N},\varphi-P_{N}^{10}\varphi)&\leq & C||\xi_{t}^{N}||_{1,w}||\varphi-P_{N}^{10}\varphi||_{1,w}\nonumber\\
&\leq & C N^{-1}||\xi_{t}^{N}||_{1,w}||\varphi||_{2,w}\nonumber\\
&\leq & C N^{-1}||\xi_{t}^{N}||_{1,w}||\xi_{t}^{N}||_{0,w}.\label{ad94}
\end{eqnarray}
On the other hand, Remark \ref{remark21},  (\ref{c42c}), (\ref{c42d})  and (\ref{ad92}) lead to
\begin{eqnarray}
|B(e^{N},\varphi-P_{N}^{0}\varphi)|&\leq & C||e^{N}||_{1,w}||\varphi-P_{N}^{0}\varphi||_{1,w}\nonumber\\
&\leq & C N^{-1}||e^{N}||_{1,w}||\varphi||_{2,w}\nonumber\\
&\leq & C N^{-1}\left(||e^{N}||_{1,w}+\right.\nonumber\\
&&\left.||\delta-P_{N}^{10}\delta||_{0,w}\right)||\xi_{t}^{N}||_{0,w}.\label{ad95}
\end{eqnarray}
We now consider $\widetilde{B}_{j}, j=1,2,3$ defined in (\ref{btilde}). Integrating by parts, we can write, \cite{ArnoldDT1981,Quarteroni1987}
\begin{eqnarray*}
\widetilde{B}_{1}(e^{N},\varphi)&=&\int_{-1}^{1}e^{N}\left((-\alpha_{x}-\alpha_{u}(v^{N})v_{x}^{N}+v_{x}\alpha^{*})(\varphi w)_{x}\right.\\
&&\left.-\alpha(v^{N})(\varphi w)_{xx}\right)dx,
\end{eqnarray*}
and several applications of Hardy inequality, see e.~g. \cite{CanutoHQZ1988} (this is not necessary of course in the Legendre case $\mu=0$), hypothesis $\alpha \in C_{b}^{1}$, (\ref{exist}) and Theorem \ref{theorem21} imply
\begin{eqnarray}
|\widetilde{B}_{1}(e^{N},\varphi)|&\leq &C||e^{N}||_{0,w}||\varphi||_{2,w}.\label{ad_2215}
\end{eqnarray}
Similarly, we write
\begin{eqnarray*}
\widetilde{B}_{2}(e^{N},\varphi)=\int_{-1}^{1}e^{N}\left((-\beta_{x}-\beta_{u}(v^{N})v_{x}^{N}+v_{x}\beta^{*})(\varphi w)\right)dx,
\end{eqnarray*}
and hypothesis $\beta \in C_{b}^{1}$, (\ref{exist}), Theorem \ref{theorem21} and continuity of $L$ in (\ref{ad26a}) lead to
\begin{eqnarray}
|\widetilde{B}_{2}(e^{N},\varphi)|&\leq &C||e^{N}||_{0,w}||\varphi||_{1,w}.\label{ad_2216}
\end{eqnarray}
Finally, hypothesis $\gamma \in C_{b}^{1}$ implies
\begin{eqnarray}
|\widetilde{B}_{3}(e^{N},\varphi)|&\leq &C||e^{N}||_{0,w}||\varphi||_{0,w}.\label{ad_2217}
\end{eqnarray}
Thus (\ref{ad_2215})-(\ref{ad_2217}) along with (\ref{ad92}) yield
\begin{eqnarray}
|B(e^{N},\varphi)|\leq C||e^{N}||_{0,w}||\varphi||_{2,w}\leq C||e^{N}||_{0,w}||\xi_{t}^{N}||_{0,w}.\label{ad97}
\end{eqnarray}
If we apply (\ref{ad94}), (\ref{ad95}) and (\ref{ad97}) to (\ref{ad93}) we finally obtain
\begin{eqnarray*}
||\xi_{t}^{N}||_{0,w}^{2}&\leq & CN^{-1}\left(||\xi_{t}^{N}||_{1,w}+||e^{N}||_{1,w}\right)\\
&&+C\left(||e^{N}||_{0,w}+||\delta-P_{N}^{0}\delta||_{0,w}\right).
\end{eqnarray*}
If we use (\ref{ad_229}), (\ref{ad37b}), $e^{N}=\eta-\xi^{N}$, Gronwall's lemma and Theorem \ref{theorem21} then (\ref{ad37c}) holds. Finally, the proof is completed by observing that the hypothesis $\alpha, \beta, \gamma\in C_{b}^{1}$ can be removed by the same argument as in \cite{ArnoldDT1981,Quarteroni1987}.

\end{proof}

\section{Spectral collocation approximation}
\label{sec4}
Let $x_{j}, j=0,\ldots,N$ be the nodes corresponding to the Gauss-Lobatto quadrature associated to $w$ in (\ref{jacobiw}) and introduced above. We define the  semidiscrete collocation approximation as a mapping $v^{N}:(0,T)\rightarrow \mathbb{P}_{N}^{0}$ such that
\begin{eqnarray}
cv_{t}^{N}-(I_{N}(av_{xt}^{N}))_{x}=-(I_{N}(\alpha v_{x}^{N}))_{x}+\beta v_{x}^{N}+\gamma (v^{N}),\label{ad315a} 
\end{eqnarray}
at $x=x_{j}, j=1,\ldots,N-1$, with
\begin{eqnarray}
v^{N}(0)\big|_{x=x_{j}}=v_{0}(x_{j}),\; j=0,\ldots,N. \label{ad315b}
\end{eqnarray}
The same arguments as those of \cite{AbreuD2020} lead to the weak formulation of (\ref{ad315a}), (\ref{ad315b}) in the more general Jacobi case:
\begin{eqnarray}
A_{N}(v_{t}^{N},\psi)&=&B_{N}(v^{N},\psi),\; \psi\in\mathbb{P}_{N}^{0}\nonumber\\
v^{N}(0)&=&I_{N}v_{0},\label{ad317}
\end{eqnarray}
where, for $\phi, \psi\in\mathbb{P}_{N}^{0}$
\begin{eqnarray}
A_{N}(\phi,\psi)&=&(c\phi,\psi)_{N,w}+(a\phi_{x},w^{-1}(\psi w)_{x})_{N,w},\label{ad318a}\\
B_{N}(\phi,\psi)&=&(\alpha(\phi)\phi_{x},w^{-1}(\psi w)_{x})_{N,w}+(\beta(\phi)\phi_{x},\psi)_{N,w}\nonumber\\
&&+(\gamma(\phi),\psi)_{N,w}.\label{ad318}
\end{eqnarray}
From the equivalence with the bilinear form, \cite{BernardiM1989,BernardiM1997}
\begin{eqnarray*}
a_{N}(\phi,\psi)=(\phi,\psi)_{N,w}+(\phi_{x},w^{-1}(\psi w)_{x})_{N,w},\label{ad45b}
\end{eqnarray*}
which is continuous in $P_{N}\times P_{N}^{0}$ and coercive in $P_{N}^{0}$ for all weights $w_{\mu}, -1<\mu<1$, we have that $A_{N}$ in (\ref{ad318a}) satisfies the properties of continuity and ellipticity
\begin{eqnarray}
|A_{N}(\phi,\psi)|&\leq & C||\phi||_{1,N}||\psi||_{1,N},\; \phi\in P_{N}, \psi\in P_{N}^{0},\nonumber\\
A_{N}(\psi,\psi)&\geq &C||\psi||_{1,N}^{2},\; \psi\in P_{N}^{0},\label{ad_237}
\end{eqnarray}
where
\begin{eqnarray}
||\phi||_{1.N}^{2}=||\phi||_{N,w}^{2}+||\phi_{x}||_{N,w}^{2},\label{ad320}
\end{eqnarray}
\begin{remark}
\label{remark23}
Note also that if $\alpha\in C_{b}^{1}$, then the property of continuity of the bilinear form
\begin{eqnarray*}
(\phi,\psi)\mapsto (\phi_{x},w^{-1}(\psi w)_{x})_{N,w}, \;  \phi\in P_{N}, \psi\in P_{N}^{0},
\end{eqnarray*}
proved in \cite{BernardiM1997}, implies the continuity of the first term of $B_{N}$ in (\ref{ad318}). The other two terms can be estimated, when $\beta, \gamma\in C_{b}^{1}$, in a similar way, using the arguments of Remark \ref{remark21} and the equivalence of the norms $||\cdot||_{1,w}$ and (\ref{ad320}) in $\mathbb{P}_{N}$ given from Lemma \ref{lemma11}.
\end{remark}
\begin{lemma}
\label{lemma32}
There is a unique solution $v^{N}(t)$ of (\ref{ad317}) for all $t\in (0,T)$ with
\begin{eqnarray*}
||v^{N}||_{L^{\infty}(0,T,H_{w}^{1})}\leq C,\label{ad_239}
\end{eqnarray*}
where $C$ depends on $||v_{0}||_{H_{w}^{1}}$.
\end{lemma}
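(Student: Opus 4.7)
The plan is to mirror the first half of the proof of Theorem 3.2, working with the discrete bilinear form $A_N$ in place of $A$ and leveraging the continuity/ellipticity properties stated in (\ref{ad_237}) together with the norm equivalence in $\mathbb{P}_N$ provided by Lemma \ref{lemma11}.

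I would first reduce (\ref{ad317}) to a finite-dimensional ODE system. Choosing any basis of $\mathbb{P}_N^0$, the ellipticity in (\ref{ad_237}) implies that the ``mass matrix'' associated with $A_N(\cdot,\cdot)$ is invertible, so the system can be written in the explicit form $\dot{U} = F(t,U)$. Assuming initially, as in \cite{ArnoldDT1981,Quarteroni1987}, that $\alpha,\beta,\gamma \in C_b^1$, the map $F$ is continuous (in fact $C^1$) in $U$ and $t$, so standard ODE theory (Picard--Lindel\"of) yields a unique local solution. Uniqueness on the whole interval of existence follows from the same argument. The initial condition $v^N(0) = I_N v_0$ is well defined under the assumption $v_0 \in H_{w,0}^1$, and the discrete norm equivalence of Lemma \ref{lemma11} together with the interpolation estimate (\ref{c43}) gives $\|v^N(0)\|_{1,w} \leq C \|v_0\|_{1,w}$.

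Next I would derive the a priori bound that converts local existence into global existence. Taking $\psi = v_t^N \in \mathbb{P}_N^0$ in (\ref{ad317}) and using the discrete ellipticity in (\ref{ad_237}), I obtain
\begin{eqnarray*}
C \|v_t^N\|_{1,N}^2 \leq A_N(v_t^N,v_t^N) = B_N(v^N,v_t^N).
\end{eqnarray*}
Now Remark \ref{remark23} (continuity of the first term of $B_N$) together with the same kind of estimates on the remaining two terms of $B_N$ as in Remark \ref{remark21} (which can be transferred to the discrete inner product $(\cdot,\cdot)_{N,w}$ because of Lemma \ref{lemma11} and the equivalence of $\|\cdot\|_{1,w}$ with (\ref{ad320}) on $\mathbb{P}_N$) yield
\begin{eqnarray*}
|B_N(v^N, v_t^N)| \leq (\delta(t) + C\|v^N\|_{1,w}) \|v_t^N\|_{1,w},
\end{eqnarray*}
for some bounded function $\delta$. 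Dividing by $\|v_t^N\|_{1,w}$ and combining with the fundamental theorem of calculus as in (\ref{ad_226}) gives
\begin{eqnarray*}
\|v^N(t)\|_{1,w} \leq \|v^N(0)\|_{1,w} + \|\delta\|_{L^1(0,T)} + C \int_0^t \|v^N(s)\|_{1,w}\, ds.
\end{eqnarray*}
Gronwall's lemma then provides a bound on $\|v^N(t)\|_{1,w}$ uniform in $t\in (0,T)$ and depending only on $\|v_0\|_{1,w}$ and the data. This bound prevents blow-up, so the local solution extends to all of $(0,T)$, giving both the global existence claim and the stability estimate.

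I expect the main technical point to be the careful justification of the bounds on $B_N$ at the discrete level. While the continuous analogue is already established in Remark \ref{remark21}, passing to the Gauss--Lobatto quadrature inner product requires invoking Lemma \ref{lemma11} and the continuity result for the bilinear form in Remark \ref{remark23}, together with the fact that the nonlinear coefficients $\alpha(v^N), \beta(v^N), \gamma(v^N)$ evaluated at the nodes can be handled because $v^N \in \mathbb{P}_N$. As in the Galerkin proof, the assumption $\alpha,\beta,\gamma \in C_b^1$ made at the start can be removed a posteriori by the truncation/cut-off argument of \cite{ArnoldDT1981,Quarteroni1987}, once the a priori bound (\ref{exist})-type estimate is in hand.
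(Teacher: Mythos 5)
Your proposal follows essentially the same route as the paper's proof: local existence and uniqueness from the finite ODE formulation under the provisional assumption $\alpha,\beta,\gamma\in C_{b}^{1}$, an a priori bound obtained by taking $\psi=v_{t}^{N}$ and combining the discrete ellipticity (\ref{ad_237}) with the continuity bound on $B_{N}$ from Remark \ref{remark23} and the norm equivalence of Lemma \ref{lemma11}, then Gronwall's lemma to rule out blow-up, and finally removal of the $C_{b}^{1}$ hypothesis as in \cite{ArnoldDT1981,Quarteroni1987}. The only small divergence is at the initial datum: the paper bounds $||v^{N}(0)||_{1,w}=||I_{N}v_{0}||_{1,w}$ via the $H_{w}^{1}$-stability of the Gauss--Lobatto interpolant rather than via the error estimate (\ref{c43}), whose regularity requirement $s>\sup\{1,1+\mu/2\}$ is not satisfied for $v_{0}\in H_{w,0}^{1}$ alone, so you should cite the stability result instead.
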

\begin{proof}
As before, let us first assume that $\alpha, \beta,\gamma\in C_{b}^{1}$. When (\ref{ad317})  is viewed as a finite ode system for the coefficients of $v^{N}$ with respect to some basis of $P_{N}^{0}$, standard theory proves local existence and uniqueness. In order to prove continuation for $t\in (0,T)$ of $v^{N}(t)$, previous comments on $B_{N}$ in Remark \ref{remark23} show that for $\psi\in P_{N}^{0}$
\begin{eqnarray}
|B_{N}(\phi,\psi)|\leq (C||v^{N}||_{1,N}+||\delta||_{N,w})||\psi||_{1,N},\label{ad_238}
\end{eqnarray}
for some constant $C$ and where $\delta$ is defined in Remark \ref{remark21}. Then, from (\ref{ad_237}) and (\ref{ad_238}), taking $\psi=v_{t}^{N}$ in first equation of (\ref{ad317}) leads to
\begin{eqnarray*}
||v_{t}^{N}||_{1,N}\leq C\left(||v^{N}||_{1,N}+||\delta(\cdot,t)||_{1,N}\right).
\end{eqnarray*}
The equivalence of the norms from Lemma \ref{lemma11} in $\mathbb{P}_{N}$ yields a similar inequality
\begin{eqnarray*}
||v_{t}^{N}||_{1,w}\leq C\left(||v^{N}(t)||_{1,w}+||\delta(\cdot,t)||_{1,w}\right).
\end{eqnarray*}
from which
\begin{eqnarray*}
||v^{N}(t)||_{1,w}
&\leq & ||v^{N}(0)||_{1,N}+C\int_{0}^{t}C\left(||v^{N}(s)||_{1,w}+||\delta(\cdot,s)||_{1,w}\right)ds.
\end{eqnarray*}
We conclude the proof applying Gronwall's lemma and the stability of Gauss-Lobatto interpolation in the $H_{w}^{1}$ norm, that is, \cite{BernardiM1989,CanutoHQZ1988}
\begin{eqnarray*}
||I_{N}v_{0}||_{1,w}\leq ||v_{0}||_{1,w}.
\end{eqnarray*}
The hypothesis $\alpha, \beta,\gamma\in C_{b}^{1}$ can be finally removed as in \cite{ArnoldDT1981,Quarteroni1987}.
\end{proof}

The corresponding convergence result is as follows.
\begin{theorem}
\label{theorem41} Let $m\geq 2$ and assume the hypotheses of Theorem \ref{theorem32}. Then
\begin{eqnarray}
||v^{N}-v||_{L^{\infty}(0,T,H_{w}^{1})}\leq C N^{2-m},\label{t33}
\end{eqnarray}
for some constant $C$ which depends on $||v_{0}||_{H_{w}^{m}},\alpha, \beta, \gamma, T$ but not on $N$.
\end{theorem}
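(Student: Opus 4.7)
The plan is to adapt the error analysis of Theorem \ref{theorem32} to the collocation setting, the main new difficulty being the quadrature errors arising from the discrete forms $A_N,B_N$. As in Theorem \ref{theorem32}, I first assume $\alpha,\beta,\gamma\in C_{b}^{1}$; this hypothesis is removed at the end by the argument of \cite{ArnoldDT1981,Quarteroni1987}. Let $\overline{v}=R_{N}v$ be the $A$-projection defined in (\ref{ad27}), set $\eta=\overline{v}-v$ and $\xi^{N}=\overline{v}-v^{N}\in\mathbb{P}_{N}^{0}$, so that $e^{N}=v^{N}-v=\eta-\xi^{N}$. By (\ref{ad27b}) and Theorem \ref{theorem21}, $||\eta||_{1,w}+||\eta_{t}||_{1,w}\leq CN^{1-m}$, and the theorem will follow once $||\xi^{N}||_{L^{\infty}(0,T,H_{w}^{1})}\leq CN^{2-m}$ is established.

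To derive the error equation I subtract the first identity of (\ref{ad317}) from $A(\overline{v}_{t},\psi)=B(v,\psi)$, valid for $\psi\in\mathbb{P}_{N}^{0}$ by the $A$-orthogonality of $\eta_{t}$. Adding and subtracting appropriately gives, for $\psi\in\mathbb{P}_{N}^{0}$,
\begin{eqnarray*}
A_{N}(\xi_{t}^{N},\psi)&=&[A_{N}(\overline{v}_{t},\psi)-A(\overline{v}_{t},\psi)]+[B(v,\psi)-B(v^{N},\psi)]\\
&&+[B(v^{N},\psi)-B_{N}(v^{N},\psi)].
\end{eqnarray*}
Choosing $\psi=\xi_{t}^{N}$, invoking the ellipticity of $A_{N}$ from (\ref{ad_237}) and the norm equivalence of Lemma \ref{lemma11}, reduces the argument to bounding each of the three right-hand side terms.

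The middle term is purely nonlinear and I handle it exactly as in Theorem \ref{theorem32}: the identities (\ref{ident1})-(\ref{ident2}), the uniform bounds of Theorem \ref{theorem21}, the bound on $||v^{N}||_{L^{\infty}(0,T,H_{w}^{1})}$ from Lemma \ref{lemma32} and the assumption $\alpha,\beta,\gamma\in C_{b}^{1}$ give the bound $C||e^{N}||_{1,w}||\xi_{t}^{N}||_{1,w}$. For the two quadrature terms I apply (\ref{c49}) piece by piece. For $[A_{N}-A](\overline{v}_{t},\xi_{t}^{N})$ I split $A$ into its mass part $(c\,\cdot,\cdot)_{w}$ and its weighted-stiffness part $L_{a}$; applying (\ref{c49}) together with the bounds on $||\overline{v}_{t}||_{s,w}$ that follow from (\ref{ad27b})-(\ref{ad27d}) and Theorem \ref{theorem21}, I obtain a bound of the form $CN^{2-m}||\xi_{t}^{N}||_{1,w}$. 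The term $[B-B_{N}](v^{N},\xi_{t}^{N})$ is decomposed along the lines of Remark \ref{remark21} and each piece is estimated analogously via (\ref{c49}) and Lemma \ref{lemma32}, producing another $CN^{2-m}||\xi_{t}^{N}||_{1,w}$ contribution.

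Collecting the three bounds, absorbing the $||\xi_{t}^{N}||_{1,w}$ factor and using $e^{N}=\eta-\xi^{N}$ together with the already-available bound on $\eta$, I arrive at a differential-integral inequality of the form $||\xi_{t}^{N}||_{1,w}\leq C(||\xi^{N}||_{1,w}+N^{2-m})$. Since $\xi^{N}(0)=R_{N}v_{0}-I_{N}v_{0}$, a triangle inequality with (\ref{ad27b}) and (\ref{c43}) yields $||\xi^{N}(0)||_{1,w}\leq CN^{1-m}$, and Gronwall's lemma closes the estimate on $\xi^{N}$; adding the bound on $\eta$ gives (\ref{t33}). The main obstacle I anticipate is the careful bookkeeping in the first quadrature term $[A_{N}-A](\overline{v}_{t},\xi_{t}^{N})$: the stiffness piece involves $\overline{v}_{xt}$ and therefore requires the $H^{2}$-type estimate (\ref{ad27d}), and one must balance inverse inequalities on $\xi_{t}^{N}$ against projection and interpolation errors on $\overline{v}_{t}$ so that the net loss over the Galerkin rate is precisely one power of $N$, which both yields $N^{2-m}$ and dictates the hypothesis $m\geq 2$.
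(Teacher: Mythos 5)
Your overall architecture reproduces the paper's: same projection $\overline{v}=R_{N}v$ and splitting $e^{N}=\eta-\xi^{N}$, the same error equation $A_{N}(\xi_{t}^{N},\psi)=[A_{N}(\overline{v}_{t},\psi)-A(\overline{v}_{t},\psi)]+B(v,\psi)-B_{N}(v^{N},\psi)$ (this is exactly (\ref{ad323})), the same treatment of the consistency term via (\ref{c49}), (\ref{ad27b}), (\ref{ad27d}), and the same Gronwall closure. Your explicit bound on $\xi^{N}(0)=R_{N}v_{0}-I_{N}v_{0}$ is in fact more careful than the paper, which passes over this point in silence.

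However, there is a genuine gap in your decomposition of the $B$-terms. You split
$B(v,\psi)-B_{N}(v^{N},\psi)=[B(v,\psi)-B(v^{N},\psi)]+[B(v^{N},\psi)-B_{N}(v^{N},\psi)]$
and claim the quadrature error $[B-B_{N}](v^{N},\psi)$ is $O(N^{2-m})\,\|\psi\|_{1,w}$ ``via (\ref{c49}) and Lemma \ref{lemma32}.'' Estimate (\ref{c49}) controls a quadrature error by the projection and interpolation errors of the \emph{integrand} --- here $\alpha(v^{N})v^{N}_{x}$, $\beta(v^{N})v^{N}_{x}$, $\gamma(v^{N})$ --- and those errors decay only to the extent that the integrand is smooth. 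The only a priori information on $v^{N}$ is the $H_{w}^{1}$ bound of Lemma \ref{lemma32}; its higher Sobolev norms can only be reached through inverse inequalities, which cost positive powers of $N$ and destroy the rate. So no bound of order $N^{2-m}$ can be extracted from your middle anchor point $v^{N}$. The paper circumvents this by anchoring the quadrature error at $\overline{v}$ instead, writing (cf. (\ref{ad_2314})) $B(v)-B_{N}(v^{N})=[B(v)-B(\overline{v})]+[B(\overline{v})-B_{N}(\overline{v})]+[B_{N}(\overline{v})-B_{N}(v^{N})]$: the first piece is the nonlinear continuity estimate and gives $CN^{1-m}\|v\|_{m,w}$, the last is the discrete continuity of $B_{N}$ and gives $C\|e^{N}\|_{1,w}$, and the middle piece is a quadrature error evaluated at $\overline{v}$, whose higher-order norms \emph{are} controlled through $v$ by (\ref{ad27b}), (\ref{ad27d}) and Theorem \ref{theorem21}; this is precisely where the factor $N^{2-m}$ (and the requirement $m\geq 2$) comes from. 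Your proof needs this three-way splitting, or an equivalent device, to close.
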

\begin{proof}
Let $\eta, e^{N}, \xi^{N}$ be as given in (\ref{var}). For $\psi\in\mathbb{P}_{N}^{0}$, since $A(\eta_{t},\psi)=0$, we can write
\begin{eqnarray}
A_{N}(\xi_{t}^{N},\psi)=A_{N}(\overline{v}_{t},\psi)-A(\overline{v}_{t},\psi)+B(v,\psi)-B_{N}(v^{N},\psi).\label{ad323}
\end{eqnarray}
Note first that
\begin{eqnarray}
|A_{N}(\overline{v}_{t},\psi)-A(\overline{v}_{t},\psi)|&\leq &|(c\overline{v}_{t},\psi)_{N,w}-(c\overline{v}_{t},\psi)_{0,w}|\nonumber\\
&&+|\int_{-1}^{1} (E-I_{N})(a\overline{v}_{tx})(\psi w)_{x}dx|.\label{ad325}
\end{eqnarray}
where $E$ denotes the identity operator. Since $\overline{v}_{t}=I_{N}\overline{v}_{t}$ then (\ref{c49}), applied to the first term on the right hand side of (\ref{ad325}), and hypothesis on $c$ imply that
\begin{eqnarray}
 |(c\overline{v}_{t},\psi)_{N,w}-(c\overline{v}_{t},\psi)_{0,w}|\leq C||(E-P_{N-1})\overline{v}_{t}||_{0,w}||\psi||_{0,w}.\label{ad326}
\end{eqnarray}
Now, (\ref{c42a}) and (\ref{ad27b}) lead to
\begin{eqnarray}
||(E-P_{N-1})\overline{v}_{t}||_{0,w}&\leq & ||(E-P_{N-1}){v}_{t}||_{0,w}+||(E-P_{N-1})(v_{t}-\overline{v}_{t})||_{0,w}\nonumber\\
&\leq & C(N-1)^{-m}||v_{t}||_{m,w}+C(N-1)^{-1}||v_{t}-\overline{v}_{t}||_{1,w}\nonumber\\
&\leq &C(N-1)^{-m}||v_{t}||_{m,w}.\label{ad326b}
\end{eqnarray}
As far as the second term on the right hand side of (\ref{ad325}) is concerned, from continuity of $L$ in (\ref{ad26a}) we have, for $\psi\in\mathbb{P}_{N}^{0}$
\begin{eqnarray}
|\int_{-1}^{1}(E-I_{N})(a\overline{v}_{tx})(\psi w)_{x}dx|\leq ||(E-I_{N})(a\overline{v}_{tx})||_{0,w}||\psi||_{1,w}.\label{ad327}
\end{eqnarray}
Now, from  (\ref{c43}), the hypothesis on $a$ and (\ref{ad27d}) we can write
\begin{eqnarray}
||(E-I_{N})(a\overline{v}_{tx})||_{0,w}&\leq & ||(E-I_{N})(a{v}_{tx})||_{0,w}\nonumber\\
&&+||(E-I_{N})(a(v_{tx}-\overline{v}_{tx}))||_{0,w}\nonumber\\
&\leq & CN^{1-m}||av_{tx}||_{m-1,w}+CN^{-1}||a(v_{tx}-\overline{v}_{tx})||_{1,w}\nonumber\\
&\leq &CN^{1-m}||v_{t}||_{m,w}+CN^{2-m}||v_{t}||_{m,w},\label{ad327b}
\end{eqnarray}
where $C$ depends on $||a||_{m-1,w}$. In the last inequality the property, \cite{BernardiM1989,MadayQ1981}
\begin{eqnarray*}
||uv||_{s,w}\leq C ||u||_{s,w}||v||_{s,w},\; u,v\in H_{w}^{s}, s\geq 0,
\end{eqnarray*}
was used.

We now consider the last two terms in (\ref{ad323}), written as
\begin{eqnarray}
B(v,\psi)-B_{N}(v^{N},\psi)&=&B(v,\psi)-B(\overline{v},\psi)+B(\overline{v},\psi)-B_{N}(\overline{v},\psi)\nonumber\\
&&+B_{N}(\overline{v},\psi)-B_{N}(v^{N},\psi),\label{ad_2314}
\end{eqnarray}
and estimate each couple of (\ref{ad_2314}). Assume first that $\alpha, \beta,\gamma\in C_{b}^{1}$. Similar arguments to those of (\ref{ad_228}) along with  (\ref{ad27b}) imply
\begin{eqnarray}
|B(v,\psi)-B(\overline{v},\psi)|&\leq &C||v-\overline{v}||_{1,w}||\psi||_{1,w}\nonumber\\
&\leq &C N^{1-m}||v||_{m,w}||\psi||_{1,w}.\label{ad328}
\end{eqnarray}
On the other hand
\begin{eqnarray}
B(\overline{v},\psi)-B_{N}(\overline{v},\psi)&= & 
 \int_{-1}^{1} (E-I_{N})(\alpha\overline{v}_{x})(\psi w)_{x}dx\nonumber\\
&&+(\beta \overline{v}_{x},\psi)_{0,w}-(\beta\overline{v}_{x},\psi)_{N,w}. \nonumber\\
&&+(\gamma(\overline{v}),\psi)_{0,w}-(\gamma(\overline{v}),\psi)_{N,w}\nonumber
\end{eqnarray}
As in (\ref{ad327}), (\ref{ad327b})
\begin{eqnarray*}
| \int_{-1}^{1} (E-I_{N})(\alpha\overline{v}_{x})(\psi w)_{x}dx|\leq 
CN^{2-m}||v_{t}||_{m,w}||\psi||_{1,w}.
\end{eqnarray*}
Next, using (\ref{c49})
\begin{eqnarray*}
|(\beta \overline{v}_{x},\psi)_{0,w}-(\beta\overline{v}_{x},\psi)_{N,w}|&\leq &
C\left(||(E-P_{N-1})\beta(\overline{v})\overline{v}_{x}||_{0,w}\right.\\
&&\left. ||(E-I_{N})\beta(\overline{v})\overline{v}_{x}||_{0,w}\right)||\psi||_{0,w},
\end{eqnarray*} 
and  $\beta\in C_{b}^{1}$ along with (\ref{ad27d}) lead to
\begin{eqnarray*}
||(E-P_{N-1})\beta(\overline{v})\overline{v}_{x}||_{0,w}&\leq & C(N-1)^{1-m}||v_{x}||_{m-1,w}\\
&&+C(N-1)^{-1}||\overline{v}_{x}-v_{x}||_{1,w}\\
&\leq & C(N-1)^{2-m}||v||_{m,w},
\end{eqnarray*}
while, similarly to (\ref{ad327b})
\begin{eqnarray*}
||(E-I_{N})\beta(\overline{v})\overline{v}_{x}||_{0,w}&\leq & CN^{2-m}||v||_{m,w}.
\end{eqnarray*}
Finally, (\ref{c49}) also implies
\begin{eqnarray*}
|(\gamma(\overline{v}),\psi)_{0,w}-(\gamma(\overline{v}),\psi)_{N,w}|&\leq &
C\left(||(E-P_{N-1})\gamma(\overline{v})||_{0,w}\right.\\
&&\left. ||(E-I_{N})\gamma(\overline{v})||_{0,w}\right)||\psi||_{0,w}.
\end{eqnarray*} 
Now
\begin{eqnarray*}
||(E-P_{N-1})\gamma(\overline{v})||_{0,w}&\leq & ||(E-P_{N-1})\gamma({v})||_{0,w}\\
&&+||(E-P_{N-1})(\gamma({v})-\gamma(\overline{v}))||_{0,w}.
\end{eqnarray*}
Using (\ref{ident1}) with $F=\gamma$ we have
\begin{eqnarray*}
\gamma(v)&=&\delta+v\gamma^{*}(v,0),\\
\gamma(\overline{v})&=&\gamma(v)+(\overline{v}-v)\gamma^{*}(\overline{v},v).
\end{eqnarray*}
Then from (\ref{c42a}) and (\ref{ad27b})  we obtain
\begin{eqnarray*}
||(E-P_{N-1})\gamma({v})||_{0,w}&\leq & CN^{1-m}(||\delta(t)||_{m-1,w}+||v||_{m,w}),\\
||(E-P_{N-1})(\gamma({v})-\gamma(\overline{v}))||_{0,w}&\leq & CN^{1-m}||v||_{m,w}.
\end{eqnarray*}
All this leads to
\begin{eqnarray}
|B(\overline{v},\psi)-B_{N}(\overline{v},\psi)|&\leq &\left(CN^{2-m}||v||_{m,w}\right.\nonumber\\
&&\left.+N^{1-m}||\delta(t)||_{m-1,w}\right) ||\psi||_{0,w}.\label{ad_2316}
\end{eqnarray}
Finally, Remark \ref{remark23} on the continuity of $B_{N}$ and Lemma \ref{lemma11} imply
\begin{eqnarray}
|B_{N}(\overline{v},\psi)-B_{N}(v^{N},\psi)|\leq C||e^{N}||_{1,w}||\psi||_{1,w}.\label{ad330}
\end{eqnarray}
We may now take $\psi=\xi_{t}^{N}$ in (\ref{ad323}), use the coercivity of $A_{N}$ and  (\ref{ad325})-(\ref{ad330}) to obtain an estimate of the form
\begin{eqnarray*}
||\xi_{t}^{N}||_{1,w}\leq CN^{2-m}(||\delta(\cdot,t)||_{m-1,w}+||v(t)||_{m,w}+||v_{t}(t)||_{m,w}+||e^{N}||_{1,w}).
\end{eqnarray*}
Then  Gronwall's lemma, the property $e^{N}=\eta-\xi^{N}$ and Theorem \ref{theorem21} conclude the proof of (\ref{t33}). The condition $\alpha, \beta,\gamma\in C_{b}^{1}$ can be removed as in previous results.

\end{proof}
\begin{remark}
\label{remark41}
From the previous proof we can observe that the difference with respect to that of Theorem \ref{theorem32} is the use of the generalized estimate (\ref{ad27d}). This forces the term $N^{2-m}$ in (\ref{t33}), instead of $N^{1-m}$ as in (\ref{ad37b}).
\end{remark}

\section{Concluding remarks}
\label{sec5}
This paper is concerned with the numerical approximation of ibvp of pseudo-parabolic type with Dirichlet boundary conditions by using spectral discretizations in space. The approach, initiated with the {insightful} computational study in \cite{AbreuD2020} is here complemented with the {rigorous} numerical analysis of the spectral Galerkin and collocation discretizations  based on Jacobi polynomials. Existence of numerical solution and error estimates when approximating (\ref{ad11a})-(\ref{ad11c}) are proved. The estimates indeed depend on the regularity of the data. In particular, they show spectral convergence in the smooth case, confirming the corresponding experiments in \cite{AbreuD2020}. On the other hand, the different exponent in the decay of the error (which depends on this regularity) between the estimates in the Galerkin and collocation schemes is due to the inverse inequalities (\ref{c41}) associated to the Jacobi polynomials, cf. the Fourier spectral case studied in \cite{Quarteroni1987}. { Some concrete perspectives for a future research are shared with \cite{AbreuD2020}, but with important differences}. In particular, we are interested in searching for {a sharp} improvement of the estimates in order to justify some hypotheses conjectured from the  representative set of numerical experiments presented and discussed in \cite{AbreuD2020}. {Nevertheless, we keep in mind the challeging task to preserve the numerical efficiency and robustness to the overall spectral approach being analyzed.}

\section*{Acknowledgements}
E. Abreu was partially supported by FAPESP 2019/20991-8,
CNPq 306385/2019-8 and PETROBRAS 2015/00398-0 and 2019/00538-7.

\end{document}